\newcommand{\CM}{Cohen-Macaulay}
\newcommand{\wrt}{with respect to}
\newcommand{\n}{\mathfrak{n} }
\newcommand{\m}{\mathfrak{m} }
\newcommand{\R}{\mathcal{R} }
\newcommand{\C}{\mathcal{C}}
\newcommand{\rt}{\rightarrow}
\newcommand{\ov}{\overline}
\theoremstyle{plain}
\newtheorem{theorem}{Theorem}[section]
\newtheorem{lemma}[theorem]{Lemma}
\theoremstyle{definition}
\newtheorem{definition}[theorem]{Definition}
\newtheorem{remark}[theorem]{Remark}
\theoremstyle{remark}
\begin{document}

\title{On $p_g$-ideals}
\author{Tony~J.~Puthenpurakal}
\date{\today}
\address{Department of Mathematics, IIT Bombay, Powai, Mumbai 400 076}

\email{tputhen@math.iitb.ac.in}
\subjclass{Primary 13A30, 13B22; Secondary 13A50, 14B05}
\keywords{$p_g$-ideal, normal Rees rings, Cohen-Macaulay rings, stable ideals}

 \begin{abstract}
Let $(A,\m)$ be an excellent normal domain of dimension two. We define an $\m$-primary ideal $I$ to be  a $p_g$-ideal if the Rees algebra $A[It]$ is a \CM \ normal domain.
When $A$ contains an algebraically closed field $k \cong A/\m$ then Okuma, Watanabe and  Yoshida proved that $A$ has $p_g$-ideals and furthermore product of two
$p_g$-ideals is a $p_g$ ideal. In this article we show that if $A$ is an   excellent normal domain of dimension two containing a  field $k \cong A/\m$ of characteristic zero
then also $A$ 
has $p_g$-ideals. Furthermore product of two $p_g$-ideals is $p_g$. 
\end{abstract}
 \maketitle
\section{introduction}
Zariski's theory of integrally closed ideals in a two dimensional regular local ring, $(A,\m)$, has been very influential; see \cite[Chapter 14]{HS} for a modern exposition. 
In particular product of two $\m$-primary integrally closed ideals is integrally closed. If the residue field of $A$ is infinite then every $\m$-primary integrally closed ideal $I$ is stable  i.e., for any minimal reduction $Q$ of $I$  we have $I^2 = QI$. In particular the Rees algebra $\R(I) = A[It]$ is a \CM \ normal domain (this also holds if $A/\m$ is finite).
Later 
Lipman proved that if $(A,\m)$ is a two dimensional rational singularity then  analogous results holds, see \cite{Lipman}.
However we cannot significantly weaken the hypotheses on $A$. In fact Cutkosky \cite{Cut} proved that if  $(A,\m)$ is an excellent normal local domain of dimension two such that $A/\m$ is algebraically closed and if for any $\m$-primary  integrally closed ideal $I$ we have $I^2$ is integrally closed then $A$ is a rational singularity.

Assume $(A,\m)$ is an excellent normal domain of dimension two  containing an algebraically closed field $k \cong A/\m$.
For such rings Okuma, Watanabe and  Yoshida in \cite{OWY-1} introduced (using geometric techniques) the notion of $p_g$-ideals. They showed that $p_g$ ideals are integrally closed.
If $I, J$ are two $\m$-primary $p_g$ ideals then $IJ$ is a $p_g$-ideal. Furthermore $I$ is stable and so  the Rees algebra $\R(I)$ is a \CM \ normal domain. 
They also proved that if $A$ is also a rational singularity then any $\m$-primary integrally closed ideal
is a $p_g$-ideal. In a later paper \cite{OWY-2} they showed that if $\R(I)$ is a \CM \ normal domain then $I$ is a $p_g$-ideal.
Motivated by this result we make the following definition:
\begin{definition}
 Let $(A.\m)$ be a normal domain of dimension two. An $\m$-primary ideal $I$ is said to be $p_g$-ideal in $A$ if the Rees algebra
 $\R(I) = A[It]$ is a normal \CM \  domain.
\end{definition}
We note that if $I$ is a $p_g$-ideal then all powers of $I$ are integrally closed. Furthermore if the residue field of $A$ is infinite then $I$ is stable; 
see \cite[Theorem 1]{GS}.  However from the definition
it does not follow that if $I, J$ are $p_g$-ideals then the product $IJ$ is also a $p_g$ ideal. Also we do not know that whether every normal domain of dimension two has a
$p_g$ ideal. In this paper we first prove:
\begin{theorem}\label{main-product}
Let $(A,\m)$ be an excellent two dimensional normal domain containing a perfect field $k \cong A/\m$. 
  If $I, J$ are $p_g$-ideals in $A$ then $IJ$ is also a $p_g$-ideal in $A$.
\end{theorem}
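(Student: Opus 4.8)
The plan is to reduce the statement to the theorem of Okuma--Watanabe--Yoshida over an algebraically closed residue field, by a faithfully flat base change that preserves and reflects the property ``$\R(-)$ is a normal \CM{} domain''. The transfer principle we use repeatedly is: for a flat $A$-algebra $C$ one has $\R_C(IC)=\R_A(I)\otimes_A C$, and if moreover $A\to C$ is faithfully flat with geometrically regular fibres, then both the Cohen--Macaulay property (flat local maps with \CM{} fibres) and normality (the conditions $R_1$ and $S_2$, under a regular homomorphism) ascend and descend along the induced map of Rees algebras; hence an $\m$-primary ideal is a $p_g$-ideal in $A$ if and only if its extension is a $p_g$-ideal in $C$. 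First I would reduce to the complete case: since $A$ is excellent and normal, $\widehat{A}$ is an excellent normal local domain of dimension two with residue field $k$, and $A\to\widehat{A}$ is faithfully flat with geometrically regular fibres; so $I$, $J$ and $IJ$ are $p_g$-ideals in $A$ if and only if $I\widehat{A}$, $J\widehat{A}$ and $(IJ)\widehat{A}=(I\widehat{A})(J\widehat{A})$ are $p_g$-ideals in $\widehat{A}$. Thus we may assume $A$ is complete; then $A$ contains a coefficient field, which we identify with the perfect residue field $k$.

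Next I would base change the residue field up to $\bar k$. Write $A=k[[x_1,\dots,x_n]]/\mathfrak p$ and set $B:=\bar k[[x_1,\dots,x_n]]/\mathfrak p\,\bar k[[x_1,\dots,x_n]]$, the complete tensor product $A\,\widehat{\otimes}_k\bar k$. Then $A\to B$ is a faithfully flat local homomorphism with $\m B$ maximal and $B/\m B=\bar k$, so $B$ is a complete --- hence excellent --- local ring of dimension two with algebraically closed residue field. Two points need care. First, $k[[\underline x]]\to\bar k[[\underline x]]$ is a regular homomorphism, because $\bar k/k$ is separable ($k$ being perfect), so the quotient $B$ satisfies $R_1$ and $S_2$. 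Second, $B$ is a \emph{domain}: indeed $k$ is algebraically closed in $Q(A)$ --- any $\alpha\in Q(A)$ algebraic over $k$ is integral over $A$, hence lies in $A$ by normality, and the integral closure of $k$ in $A$ injects into $A/\m=k$ and so equals $k$ --- whence $\Spec A$ is geometrically integral over the perfect field $k$ and $\mathfrak p\,\bar k[[\underline x]]$ remains prime. Since $A\to B$ is again faithfully flat with geometrically regular fibres and $(IJ)B=(IB)(JB)$, the transfer principle gives: $I$, $J$ are $p_g$ in $A$ if and only if $IB$, $JB$ are $p_g$ in $B$, and $IJ$ is $p_g$ in $A$ if and only if $(IB)(JB)$ is $p_g$ in $B$.

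Finally I would invoke Okuma--Watanabe--Yoshida. The ring $B$ is an excellent normal two--dimensional local domain containing an algebraically closed field $\bar k\cong B/\m B$, and $IB$, $JB$ are $p_g$-ideals in $B$; over such a ring our Definition of $p_g$-ideal coincides with theirs, by \cite{OWY-2} together with the stability of $p_g$-ideals. Hence $(IB)(JB)$ is a $p_g$-ideal in $B$, and descending along $A\to B$ and then along $A\to\widehat{A}$ shows that $IJ$ is a $p_g$-ideal in the original ring $A$.

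I expect the main obstacle to be the second point of the base-change step: showing that $B=A\,\widehat{\otimes}_k\bar k$ is again a normal domain, i.e.\ that $\mathfrak p\,\bar k[[\underline x]]$ stays prime and that the quotient satisfies $R_1$ and $S_2$. This is precisely where the perfectness of $k$ enters in an essential way --- through separability of $\bar k/k$ for geometric reducedness, together with the normality of $A$ for geometric irreducibility --- and it is, I believe, also the crux of the companion existence theorem stated in the abstract. The remaining ingredients, namely the two-way transfer of the Cohen--Macaulay property and of normality for finitely generated graded algebras along faithfully flat maps with geometrically regular fibres, are standard, and their application to $\R_A(I)$ is routine. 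If one wishes to have minimal reductions available at some stage, one may additionally pass through $A\to A[t]_{\m A[t]}$ (again faithfully flat with geometrically regular fibres, and with perfect infinite residue field $k(t)$) before completing, but this is not logically required for the argument above.
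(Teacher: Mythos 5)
Your overall strategy is the same as the paper's: transfer the problem along a faithfully flat base change to a two-dimensional excellent normal local ring with algebraically closed residue field, invoke the Okuma--Watanabe--Yoshida product theorem there (using \cite{OWY-2} to identify their notion of $p_g$-ideal with the Rees-algebra definition), and descend. The difference is only in the implementation: the paper never completes, but works with $T=A\otimes_k\ov{k}$, realized as the filtered union of the finite \'etale extensions $A^E=A\otimes_kE$; Noetherianity comes from EGA, excellence from Greco's theorem, and normality, integral closedness and the $p_g$-property are transferred by elementary limit arguments over the $A^E$. You complete first and then change the coefficient field, i.e.\ you work with $B=A\,\widehat{\otimes}_k\,\ov{k}=\ov{k}[[\underline{x}]]/\mathfrak{p}\,\ov{k}[[\underline{x}]]$. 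This works, but notice where the weight falls: everything hinges on $k[[\underline{x}]]\to\ov{k}[[\underline{x}]]$ being flat with geometrically regular fibres, which you justify only with ``because $\ov{k}/k$ is separable''. The statement is true but not formal: the standard argument factors the map through $\ov{k}\otimes_kk[[\underline{x}]]=\bigcup_E E[[\underline{x}]]$, proves this ring Noetherian and excellent by exactly the limit and Greco arguments of the paper, and then uses that the completion map of an excellent local ring is regular. So your route does not actually bypass the paper's machinery; it repackages it, or else requires a citation that does.

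One sub-argument is incorrect as stated. You deduce that $B$ is a domain from the fact that $k$ is algebraically closed in $Q(A)$, hence $\Spec A$ is geometrically integral over $k$, ``whence $\mathfrak{p}\,\ov{k}[[\underline{x}]]$ remains prime''. Geometric integrality gives that $A\otimes_k\ov{k}$ is a domain; but $B$ is the \emph{completion} of that (in general non-complete) local ring, and primality is not in general preserved by completion, so this inference has a gap. The correct and simpler deduction, available to you anyway, is that $B$ is Noetherian local and normal (by the $R_1$, $S_2$ ascent along the regular homomorphism you already invoke, or because $B=\widehat{T}$ with $T=A\otimes_k\ov{k}$ excellent and normal), and a normal Noetherian local ring is a domain. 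With that repair, and with the regularity of $k[[\underline{x}]]\to\ov{k}[[\underline{x}]]$ properly justified, your proof is correct; the completion detour buys only the convenience of landing in a complete (hence automatically excellent) ring, at the cost of the extra input just described, whereas the paper's direct use of $T$ needs neither Cohen's structure theorem nor facts about formal fibres beyond excellence of $A$.
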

Regarding existence of $p_g$-ideals we prove
\begin{theorem}
 \label{main-existence}
 Let $(A,\m)$ be an excellent two dimensional normal domain containing a field $k \cong A/\m$ of characteristic zero.
 Then there exists $p_g$ ideals in $A$.
\end{theorem}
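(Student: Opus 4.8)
The strategy is to reduce, via a faithfully flat local base change, to the case of an algebraically closed residue field --- where \cite{OWY-1} provides $p_g$-ideals --- and then to descend. Since $A$ is excellent, $A \to \wh{A}$ is faithfully flat with geometrically regular fibers and $\wh{A}$ is a complete normal local domain of dimension two containing $k$ as a coefficient field. By Noether normalization choose a module-finite inclusion $P := k[[y_1,y_2]] \hookrightarrow \wh{A}$, let $\ov{k}$ be an algebraic closure of $k$, and set
\[
  B \;:=\; \wh{A} \otimes_{P} \ov{k}[[y_1,y_2]].
\]

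First I would verify that $B$ is an excellent normal local domain of dimension two with residue field $\ov{k}$, and that $A \to \wh{A}\to B$ is faithfully flat. Being module-finite over the complete regular local ring $\ov{k}[[y_1,y_2]]$, $B$ is a complete --- hence excellent --- Noetherian ring; from $B/\m B \cong \ov{k}$ one reads off that $B$ is local with maximal ideal $\n = \m B$ and residue field $\ov{k}$, and $\dim B = 2$ since $\wh{A}\to B$ is flat with zero-dimensional fibers. As $\operatorname{char} k = 0$, the extension $\ov{k}/k$ is separable, so $P \to \ov{k}[[y_1,y_2]]$ is a regular homomorphism and hence so is its base change $\wh{A}\to B$; therefore normality ascends, and $B$, being normal, Noetherian and local, is a normal domain. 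Next, the descent principle I need: for an $\m$-primary ideal $I$ of $A$ one has $\R_A(I)\otimes_A B = \R_B(IB)$, which is a domain, and Cohen--Macaulayness and normality descend along the faithfully flat map $\R_A(I)\to \R_A(I)\otimes_A B$; hence if $IB$ is a $p_g$-ideal of $B$, then $I$ is a $p_g$-ideal of $A$. So it suffices to exhibit an $\m$-primary ideal $I$ of $A$ with $IB$ a $p_g$-ideal of $B$.

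To do this I would use a resolution of singularities $\pi_0\colon X_0 \to \Spec A$ (available for excellent surfaces). Regular base change along $A \to B$ yields a resolution $X_B := X_0\times_A B \to \Spec B$, with the exceptional divisor of $X_0$ pulling back to that of $X_B$. For an anti-nef cycle $W_0$ on $X_0$ with strictly positive coefficients, let $W$ be its pullback to $X_B$, again anti-nef with positive coefficients; flat base change of global sections gives
\[
  I_{W_0} B \;=\; H^0\!\big(X_0,\mathcal{O}_{X_0}(-W_0)\big)\otimes_A B \;=\; H^0\!\big(X_B,\mathcal{O}_{X_B}(-W)\big) \;=\; I_{W},
\]
where $I_{W_0}$ is an $\m$-primary integrally closed ideal of $A$. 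Choosing $W_0$ large enough, $W$ becomes an anti-nef cycle on the resolution $X_B$ of $B$ large enough that the construction behind the existence statement of \cite{OWY-1}, applied to $B$, produces $I_{W} = I_{W_0}B$ as a $p_g$-ideal of $B$. By the descent principle, $I := I_{W_0}$ is then a $p_g$-ideal of $A$.

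I expect the main obstacle to be the base-change step --- showing that $B$ is Noetherian, normal, and a domain, which hinges on $P = k[[y_1,y_2]] \to \ov{k}[[y_1,y_2]]$ being a regular homomorphism, and this is exactly where the characteristic-zero hypothesis (through separability of $\ov{k}/k$) is used. The remaining ingredients --- faithfully flat descent of Cohen--Macaulayness and normality of Rees algebras, compatibility of resolutions and of $H^0$ with flat base change, and the fact from \cite{OWY-1} that all sufficiently large anti-nef cycles define $p_g$-ideals --- are standard once set up correctly.
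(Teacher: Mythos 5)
Your reduction to a faithfully flat local extension $B$ with algebraically closed residue field, and the descent statement (if $IB$ is a $p_g$-ideal of $B$ then $I$ is one in $A$, since the Rees algebra base changes and \CM ness, normality and integral closedness descend), are sound and parallel to what the paper does with $T=A\otimes_k\ov{k}$. The genuine gap is the final step, where you must produce an ideal \emph{of $A$} whose extension is $p_g$: you assert that a cycle $W_0$ pulled back from a resolution of $\Spec A$ can be taken ``large enough'' so that the existence construction of \cite{OWY-1} makes $I_{W_0}B$ a $p_g$-ideal of $B$. But it is false that all sufficiently large anti-nef cycles define $p_g$-ideals; that holds only when $p_g=0$ (rational singularities, Lipman's theorem). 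In \cite{OWY-1} a $p_g$-ideal is one represented by a cycle $Z$ with $h^1(X,\mathcal{O}_X(-Z))=p_g(A)$, and by relative Serre vanishing $h^1(X,\mathcal{O}_X(-nZ))=0$ for $n\gg 0$ whenever $-Z$ is relatively ample, so for $p_g>0$ large cycles give the \emph{wrong} value. Concretely, for the simple elliptic cone $k[[x,y,z]]/(x^3+y^3+z^3)$ one has $p_g=1$ while $h^1(\mathcal{O}_X(-nE))=0$ for every $n\ge 1$ on the minimal resolution; equivalently $\R(\ov{\m^n})$ is never \CM\ (the associated graded ring has $a$-invariant $0$), so no ``large'' multiple of the fundamental cycle is a $p_g$-cycle. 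The $p_g$-cycles are special (roughly, they must restrict trivially to the cohomological cycle and be globally generated), and the $p_g$-ideal that \cite{OWY-1} produces in $B$ lives a priori only over the algebraically closed residue field; there is no reason it is extended from $A$, i.e. no reason a Galois-invariant (equivalently $k$-rational) cycle with the required properties exists. That is exactly the difficulty your proposal skips, and it is the whole content of the descent problem.

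The paper closes this gap by a different mechanism, which your outline has no substitute for: the $p_g$-ideal $J$ of $T=A\otimes_k\ov{k}$ is defined over some finite Galois level $A^F$, each Galois conjugate $\sigma(W)$ is again $p_g$, the product $K=\prod_{\sigma\in G}\sigma(W)$ is $p_g$ by Theorem \ref{main-product} and is $G$-invariant, and then (using characteristic zero via the Reynolds operator) the invariant ring $\R(K)^G$ is a \CM\ normal graded subring of $A[t]$ whose Veronese $\R(K)^{G\,<l>}=\R(V_l)$ exhibits $V_l=K^l\cap A$ as a $p_g$-ideal of $A$. Some version of this averaging-and-Veronese argument (or another genuine descent-of-existence argument) is needed; ``choose the cycle large enough'' cannot replace it.
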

See Remark \ref{obs} to see the reason why our technique fails in positive characteristic.

We now describe in brief the contents of this paper. In section two we discuss
some preliminary results that we need. In section three we describe a construction and prove Theorem \ref{main-product}. In the next section we prove Theorem \ref{main-existence}.

\section{preliminaries}
In this section we prove the following preliminary result that we need. Parts of it are already known.

\begin{lemma}\label{basic}
Let $(A,\m)$ be a Noetherian local ring containing a perfect field $k \cong A/\m$. Let $\ell$ be a finite extension of $k$. Set $B = A \otimes_k \ell$. Then we have the following
\begin{enumerate}[\rm (1)]
 \item $B$ is a finite flat $A$-module.
 \item $B$ is a Noetherian ring.
 \item $B$ is local with  maximal ideal $\m B$ and residue field isomorphic to $\ell$.
 \item $B$ contains $\ell$.
 \item $A$ is \CM \ (Gorenstein, regular) \ if and only if $B$ is \CM \ (Gorenstein, regular).
 \item  If $A$ is excellent then so is $B$.
 \item If $A$ is normal then so is $B$.
 \item If $A$ is excellent normal and $I$ is an integrally closed ideal in $A$ then $IB$ is an integrally closed ideal in $B$.
 \item If $\ell$ is a Galois extension of $k$ with Galois group $G$ then $G$ acts on $B$ (via $\sigma(a\otimes t) = a\otimes \sigma(t)$). Furthermore if $|G|$ is invertible in $k$
 then $B^G = A$.
\end{enumerate}
\end{lemma}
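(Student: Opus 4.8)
The plan is to establish the nine claims essentially in the order listed, since each feeds into the next. The key structural input is that $k$ is perfect and $\ell/k$ is finite, hence separable, so $\ell = k(\alpha)$ for a single primitive element and $\ell \cong k[x]/(f)$ for a separable irreducible $f$. Thus $B = A\otimes_k \ell \cong A[x]/(f(x))$ as an $A$-algebra, which immediately gives that $B$ is a free $A$-module of rank $[\ell:k]$ (proving (1)) and that $B$ is module-finite over the Noetherian ring $A$, hence Noetherian (proving (2)). For (3) and (4): since $f$ is separable, $f$ stays separable (in particular squarefree, and by Hensel-type reasoning over the complete case, or directly) modulo $\m$; more simply, $B/\m B = (A/\m)\otimes_k \ell = k\otimes_k\ell = \ell$ is a field, so $\m B$ is a maximal ideal, and since $B$ is module-finite over the local ring $A$ every maximal ideal of $B$ lies over $\m$, forcing $B/(\text{any maximal ideal})$ to be a quotient of $\ell$, i.e. $\ell$ itself; finiteness of $B$ over $A$ plus $B/\m B$ being a field (not just Artinian with several factors) shows $\m B$ is the unique maximal ideal. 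And $B \supseteq 1\otimes\ell \cong \ell$, giving (4). For (3), the cleanest route is: $B$ is semilocal, its Jacobson radical contains $\m B$, and $B/\m B = \ell$ is a field, so $\m B$ is the Jacobson radical and is maximal, hence $B$ is local.

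For (5), (6), (7): flatness and $\m B$ being the maximal ideal with residue field $\ell$ give that $A \to B$ is faithfully flat with regular (indeed field) closed fiber, so descent and ascent of Cohen-Macaulay, Gorenstein, and regular properties along a flat local homomorphism with such fibers is standard (e.g. Matsumura, using that $\dim B = \dim A$ and the fiber is CM/Gorenstein/regular); this yields (5). Excellence (6) ascends to essentially-finite-type algebras and in particular to finite algebras over an excellent ring. For normality (7): $B$ is a finite flat extension of the normal domain $A$; $B$ is reduced since $B\otimes_A \operatorname{Frac}(A) = \operatorname{Frac}(A)\otimes_k\ell$ is a finite separable (étale) algebra over a field, hence reduced, and $B$ is actually a domain because $\ell/k$ separable and $A$ normal — more carefully, one checks $B$ satisfies Serre's $(R_1)$ and $(S_2)$ via the flat base change formulas, using that the fibers are fields; alternatively cite that a flat local extension with geometrically normal fibers of a normal ring is normal. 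I should remark that $B$ is connected (local!), reduced, and $(R_1)+(S_2)$, hence a normal domain.

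For (8): given $I$ integrally closed in $A$, I want $IB$ integrally closed in $B$. Since $A\to B$ is faithfully flat, $IB\cap A = I$, and more is true: the plan is to use that integral closure commutes with this flat base change. One clean approach: $\R(IB) = \R(I)\otimes_A B = \R(I)\otimes_k\ell$, and by the already-established parts applied to the two-dimensional-fiber situation (or rather to the graded ring), $\R(I)\otimes_k\ell$ is normal if $\R(I)$ is normal — but here $I$ is only integrally closed, not $p_g$, so instead argue ideal-theoretically: the integral closure $\overline{IB}$ is computed after completion, and $\wh{B} = \wh{A}\otimes_k\ell$ (using $B$ module-finite over $A$), reducing to the complete case; then use that $\wh A \to \wh A\otimes_k \ell$ is finite étale, so the normalization of the Rees algebra base-changes well, giving $\overline{I\wh B} = \overline{I\wh A}\,\wh B = (I\wh A)\wh B = I\wh B$. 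The honest statement is: for module-finite flat extensions induced by a separable field extension, $\overline{IB} = \overline{I}\,B$; I would prove this by reducing mod nilpotents (none here) and using the valuative criterion together with the fact that every valuation of $\operatorname{Frac}(B)$ restricts to a valuation of $\operatorname{Frac}(A)$ and $B$ is unramified over $A$ in codimension one (indeed everywhere, being étale over the fraction field and normal).

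For (9): if $\ell/k$ is Galois with group $G$, then $\sigma\mapsto (1\otimes\sigma)$ defines a $G$-action on $B = A\otimes_k\ell$ by $A$-algebra automorphisms; clearly $A = A\otimes_k k \subseteq B^G$. Conversely, write an element of $B$ as $\sum_i a_i\otimes t_i$ with $\{t_i\}$ a $k$-basis of $\ell$; averaging via the Reynolds operator $e = \frac{1}{|G|}\sum_{\sigma\in G}\sigma$ (which makes sense since $|G|$ is invertible in $k$) and using linear independence of characters / the normal basis theorem, or more simply that $\ell^G = k$ and $A$ is $k$-flat so $(A\otimes_k\ell)^G = A\otimes_k \ell^G = A\otimes_k k = A$ — the point being that taking $G$-invariants is exact here and commutes with the flat base change $-\otimes_k A$. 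I expect the main obstacle to be part (8): one must be careful that integral closure of ideals does not in general commute with arbitrary flat base change, and the argument really uses separability/étale-ness of $\ell/k$ (so that $B$ is unramified over $A$ in the relevant sense), together with reduction to the complete case to handle "excellent"; getting the valuative-criterion bookkeeping exactly right, rather than the ring-theoretic ascent in (5)–(7), is where the care is needed.
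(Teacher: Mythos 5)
For parts (1)--(7) and (9) your plan is essentially the paper's proof: primitive element gives $B\cong A[x]/(f)$, hence (1)--(4); the flat local map $A\to B$ with zero-dimensional regular closed fibre gives (5) and (7) by the standard ascent/descent results in Matsumura (the paper uses 23.3, 23.4, 23.7 and 23.9), excellence passes to the finite $A$-algebra $B$, and (9) is the Reynolds-operator/averaging argument (your remark that invariants commute with the flat base change $A\otimes_k-$ is the same computation). Two small points: in (7) the fibres $\kappa(P)\otimes_k\ell$ are finite \emph{products} of fields rather than fields (this is what the paper checks before invoking Matsumura 23.9), and your extra claim that $B$ is a domain is correct but not needed (local plus normal suffices, and the statement only asserts normality).

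The genuine weak point is (8), which carries the real content of the lemma. Your main route completes first and then asserts the chain $\ov{I\wh{B}}=\ov{I\wh{A}}\,\wh{B}=(I\wh{A})\wh{B}$. The second equality presupposes that $I\wh{A}$ is integrally closed, i.e.\ that integral closedness persists under completion ($\ov{I}\wh{A}=\ov{I\wh{A}}$ for excellent $A$); this is a true but nontrivial theorem of the same depth as the statement you are proving, and you neither prove it nor pin down a citation, so the completion detour does not simplify anything -- it just moves the difficulty. The first equality ("the normalization of the Rees algebra base-changes well along the finite \'etale map") is exactly the paper's actual argument, carried out over $A$ itself without completing: one needs to know that $\ov{\R}=\bigoplus_{n\ge 0}\ov{I^n}$ is \emph{module-finite} over $\R=A[It]$ -- this is where excellence and normality enter, via $\wh{A}$ reduced and Rees's theorem for analytically unramified rings -- so that the graded Noetherian normality-ascent of (7) applies to $\ov{\R}\otimes_k\ell$, which is then identified with the integral closure of $\R\otimes_k\ell$ in $B[t]$, giving $\ov{I^n}B=\ov{I^nB}$. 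Your sketch never mentions this finiteness, which is the one ingredient excellence actually buys, and your fallback "valuative criterion plus unramified in codimension one" is too vague to check as stated. So the strategy is the right one, but as written (8) rests on two unproved assertions, one of which is essentially the target statement after completion.
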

\begin{proof}
As $k$ is perfect we have that $\ell$ is a separable extension of $k$. So by primitive element theorem we have that $\ell = k(\alpha)$. Let $f(x)$ be the minimal poynomial of 
$\alpha$. Then $B \cong A[X]/(f(x))$. We now prove our assertions.

(1)  This is clear.\\
(2) This follows from (1). \\
(3) Let $\n$ be a maximal ideal in $B$. Then as $B$ is finite over $A$ we get $\n \cap A = \m$. So $\n$ contains $\m B$. Notice
$B/\m B \cong k[x]/(f(x)) \cong \ell$. So $\m B$ is a maximal ideal in $B$. It follows $\n = \m B$. The result follows.\\
(4) This is clear. \\
(5) As $A$ is excellent so is $A[X]$. As $B$ is a quotient of $A[X]$ we get that $B$ is also excellent.\\
(6) The extension $A \rt B$ is flat with fiber $F \cong \ell$. 
The result follows from Corollary to Theorem 23.3, Theorem 23.4 and Theorem 23.7 in the text \cite{Mat}. 

(7) As $A$ is normal it satisfies $R_1$ and $S_2$. Let $P$ be a prime ideal in $P$ and let $\kappa(P)$ be the residue field of $A_P$. Then note that 
$B \otimes_A \kappa(P) = \ell\otimes_k \kappa(P)$ is a finite direct product of fields and so is regular. The result now follows from Theorem 23.9 in \cite{Mat}.

(8) As $A$ is normal then so is $A[t]$. Let $\R = A[It]$ be the Rees algebra of $A$ \wrt\ $I$. Set $\ov{\R} = \bigoplus_{n \geq 0}\ov{I^n}$ be the Rees-ring of the 
integral closure filtration of $I$. As $A$ is normal and excellent it follows that the completion $\widehat{A}$ is also normal. In particular it is reduced. So
So $\ov{\R}$ is a finite extension of $\R$. We note that $\ov{\R}$ is the integral closure of $\R$ in $A[t]$. So $\ov{\R}$ is normal. By graded version of (7)
we get that $\ov{\R}\otimes_k \ell$ is normal. We note that 
$\R\otimes_k \ell = \bigoplus_{n \geq 0}I^nB$ and $\ov{\R}\otimes_k \ell = \bigoplus_{n \geq 0}\ov{I^n}B$.
We have graded inclusions
\[
 \R\otimes_k \ell \subseteq \ov{\R}\otimes_k \ell \subseteq B[t].
\]
We note that as $B$ is normal we get $B[t]$ is normal. Also  $\ov{\R}\otimes_k \ell $ is a finite
 extension of $\R\otimes_k \ell$. It follows that $\ov{\R}\otimes_k \ell$ is the integral closure of $\R\otimes_k \ell$ in $B[t]$. In particular
 we have $\ov{I^n}B = \ov{I^nB}$ for $n \geq 1$. So for $n = 1$ we get $IB = \ov{I}B = \ov{IB}$. Thus $IB$ is integrally closed in $B$.

 (9) It is clear that $G$ acts on $B$ (via the action described) and $A \subseteq B^G$. Now assume $|G|$ is invertible in $k$. Let $\rho_k^\ell, \rho^B_A$ be the corresponding Reynolds operators.
Let $\xi = \sum_{i = 1}^{r}a_i\otimes t_i \in B^G$. Then note
\[
 \xi = \rho^B_A(\xi) = \sum_{i=1}^{r}\left( a_i\otimes \rho_k^\ell(t_i) \right) = \left(\sum_{i=1}^{r}a_i\rho^\ell_k(t_i) \right) \otimes 1  \in A.
\]

 \end{proof}

\section{A construction and proof of Theorem \ref{main-product}}
Throughout this section $(A,\m)$ is a Noetherian local ring containing a perfect field $k \cong A/\m$. \emph{Also throughout we assume $\dim A = 2$}. Fix an algebraic closure $\ov{k}$ of $k$. We investigate properties of 
$A\otimes_k \ov{k}$. Some of the results here are already known. However some of our applications regarding $p_g$-ideals is new and crucial
to prove Theorem \ref{main-product} and Theorem \ref{main-existence}.

\s \label{limit} Let 
$$\C_k = \{ E \mid E \ \text{is a finite extension of $k$ in $\ov{k}$} \}.$$
We note that  $\C_k$ is a directed system of fields with $\lim_{E \in \C_k} E  = \ov{k}$.
For $E \in \C_k$ set $A^E = A\otimes_k E$. Then by \ref{basic} $A^E$ is a finite flat extension of $A$. Also $A^E$ is local with maximal ideal $\m^E = \m A^E$.
Clearly $\{ A^E \}_{E \in \C_k}$ forms a directed system of local rings and we have $\lim_{E \in \C_k} A^E  = A\otimes_k \ov{k}$.
By \cite[Chap. 0. (10.3.13)]{EGA3} it follows that $A\otimes_k \ov{k}$ is a Noetherian local ring (say with maximal ideal $\m^{\ov{k}})$.
Note that we may consider $A^E$ as a subring of $A\otimes_k \ov{k}$. We have 
$$ A\otimes_k \ov{k} = \bigcup_{E\in \C_k}A^E \quad \text{and} \quad \m^{\ov{k}} = \bigcup_{E \in \C_k} \m^E.$$
It follows that $\m (A\otimes_k \ov{k}) = \m^{\ov{k}}$. It is also clear that $A\otimes_k \ov{k}$ contains $\ov{k}$ and its residue field
is isomorphic to $\ov{k}$. The extension $A \rt A\otimes_k \ov{k}$ is flat with fiber $\cong \ov{k}$. In particular $\dim A\otimes_k \ov{k}$ is two.

\s\label{cofinal} Let $F \in \C_k$. Set
\[
 \C_F = \{ E \mid E \in \C_k,  E\supseteq F \}.
\]
Then $\C_F$ is cofinal in $\C_k$. So we have $\lim_{E \in \C_F} A^E = A\otimes_k \ov{k}$.
Also note that if $E \in \C_F$ then 
\[
 A^E = A\otimes_k E = A\otimes_k F \otimes_F E = A^F \otimes_F E.
\]
It also follows that $\m^E = \m^FA^E$.

The following result is definitely known to experts. We give a proof for the convenience of the reader.
\begin{lemma}
 \label{excellent}
 If $A$ is excellent then so is $A\otimes_k \ov{k}$
\end{lemma}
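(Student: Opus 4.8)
The plan is to reduce excellence of $A \otimes_k \ov{k}$ to excellence of the finite-level rings $A^E$ (which are excellent by part (6) of Lemma \ref{basic}) by using that excellence is preserved under suitable filtered colimits, together with the structural facts recorded in \ref{limit} and \ref{cofinal}.

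First I would recall what must be checked: $A \otimes_k \ov{k}$ is already known (from \ref{limit}) to be Noetherian and local with maximal ideal $\m^{\ov k}$, so it remains to verify that it is universally catenary and that its formal fibres are geometrically regular (the $J$-2 / G-ring condition together with the Nagata condition). For a local ring it suffices to check that $A \otimes_k \ov k$ is a universally catenary $G$-ring. Catenarity is the easy half: since $A^E$ is universally catenary for each $E$ and $A \otimes_k \ov k = \bigcup_E A^E$ is a filtered union of these, and since dimension is preserved ($\dim A^E = \dim A = 2$ for all $E$ by \ref{basic} and \ref{limit}), one can pass catenarity to the colimit; alternatively one notes $A \otimes_k \ov k$ has dimension two and a two-dimensional Noetherian local domain... but here one should be careful since we do not yet know it is a domain, so I would argue via the colimit directly, using that chains of primes in $A\otimes_k \ov k$ contract to chains in some $A^E$ of the same length because a finite chain lives in a finitely generated subring.

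The substantive part is the $G$-ring property, i.e. that the formal fibres of $A \otimes_k \ov k$ are geometrically regular. Here the key point is to relate the completion $\wh{A \otimes_k \ov k}$ to the completions $\wh{A^E}$. Since $\m^{\ov k} = \m(A\otimes_k \ov k)$ and each $\m^E = \m A^E$, the $\m^{\ov k}$-adic and $\m$-adic topologies agree, so $\wh{A\otimes_k \ov k} = \varprojlim (A\otimes_k \ov k)/\m^n(A\otimes_k \ov k)$, and one checks $(A\otimes_k \ov k)/\m^n(A\otimes_k \ov k) = (A/\m^n)\otimes_k \ov k$. I would then want to deduce that the formal fibre of $A \otimes_k \ov k$ over a prime $\q$ is obtained by a faithfully flat base change from a formal fibre of $A^E$ (for $E$ large enough that $\q$ comes from a prime of $A^E$), followed by a base change along $E \to \ov k$; since the formal fibres of $A^E$ are geometrically regular and geometric regularity is stable under field extension, the formal fibres of $A\otimes_k \ov k$ are geometrically regular. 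Finally, the Nagata (universally Japanese) condition follows for a $G$-ring that is, in addition, universally catenary once one knows it is excellent in the remaining sense, or can be checked directly by the same colimit argument applied to finiteness of integral closures.

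The main obstacle I anticipate is the completion-compatibility step: making rigorous the claim that the formal fibre of the colimit ring $A\otimes_k \ov k$ is a base change of a formal fibre of some $A^F$. The difficulty is that completion does not commute with filtered colimits in general, so one cannot simply write $\wh{A\otimes_k \ov k} = \varinjlim \wh{A^E}$; instead I would exploit that $\wh{A \otimes_k \ov k} = \wh{A^F} \otimes_{A^F} (A \otimes_k \ov k)$ is unavailable directly, but $\wh{A\otimes_k \ov k}$ is a flat $A^F$-algebra whose fibre over $\m^F$ is $\wh{A^F}/\m^F\wh{A^F} \otimes \cdots$, and then track the regularity of fibres through the tower $A^F \to A \otimes_k \ov k \to \wh{A\otimes_k\ov k}$ versus $A^F \to \wh{A^F} \to \wh{A^F}\otimes_{?} \ov k$. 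Concretely, I expect to invoke that $A^F \to A^F \otimes_k \ov k = A \otimes_k \ov k$ is flit with geometrically regular (indeed field) fibres, hence a regular morphism, compose it with the completion map $A\otimes_k\ov k \to \wh{A\otimes_k\ov k}$ — which is regular precisely because we are trying to prove $A \otimes_k \ov k$ is a $G$-ring, so this is circular — and therefore instead go the other way: factor $A^F \to \wh{A^F} \to \wh{A^F}\otimes_{A^F}(A\otimes_k\ov k)$, observe the second map is regular (base change of a regular map), and that $\wh{A^F}\otimes_{A^F}(A\otimes_k\ov k)$ maps to $\wh{A\otimes_k\ov k}$; showing this last ring is already complete, hence equals $\wh{A\otimes_k\ov k}$, is the crux and uses that $A\otimes_k\ov k$ is module-finite-like over $A^F$ in the relevant sense — more precisely that $\wh{A^F} \otimes_{A^F} (A\otimes_k \ov k) = \wh{A^F}\otimes_k \ov k$ is already $\m$-adically complete. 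I would settle this by the same colimit bookkeeping: $\wh{A^F}\otimes_k\ov k = \varinjlim_E \wh{A^F}\otimes_k E$, each term a finite free $\wh{A^F}$-module hence complete, and a filtered union of complete modules with compatible topologies is complete here because the transition maps are split injections of finite free modules — this is the one place I would need to check a small topological lemma carefully.
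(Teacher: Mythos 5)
Your plan has a genuine gap at precisely the point you yourself flag as the crux. You reduce the $G$-ring property to the claim that $\wh{A^F}\otimes_k \ov{k}$ is already $\m$-adically complete, to be proved by the ``small topological lemma'' that a filtered union of complete finite free modules with split transition maps is complete. That lemma is false, and so is the completeness claim whenever $\ov{k}/k$ is infinite (the only case of interest). The basic example: $k[[x]]\otimes_k \ov{k} = \bigcup_E k[[x]]\otimes_k E$ is a filtered union of finite free (hence complete) $k[[x]]$-modules with split transitions, but its completion at its maximal ideal is $\varprojlim\, \bigl(k[x]/(x^n)\bigr)\otimes_k\ov{k} = \ov{k}[[x]]$, which strictly contains it: a power series $\sum a_n x^n$ whose coefficients generate an infinite extension of $k$ lies in $\ov{k}[[x]]$ but in no $k[[x]]\otimes_k E$. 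The same phenomenon occurs for $\wh{A^F}\otimes_k\ov{k}$ versus $\wh{A\otimes_k\ov{k}}$. Consequently your factorization $A^F \rt \wh{A^F} \rt \wh{A^F}\otimes_{A^F}(A\otimes_k\ov{k}) \rt \wh{A\otimes_k\ov{k}}$ leaves the last map --- the completion of the non-complete Noetherian local ring $\wh{A^F}\otimes_k\ov{k}$ --- entirely unaccounted for, and proving that this map is regular is essentially the original problem all over again, since $\wh{A^F}\otimes_k\ov{k}$ is itself a filtered colimit of excellent rings $\wh{A^F}\otimes_k E$ along \'etale maps. So the circularity you noticed has not been eliminated, only relocated. (The catenarity and Nagata parts are also only sketched, but they are not where the argument breaks.)

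For comparison, the paper does not attempt any of this by hand: it observes that in the directed system $\{A^E\}_{E\in\C_k}$ each transition map $A^F\rt A^E = A^F\otimes_F E$ is \'etale (as $E/F$ is separable, $k$ being perfect), and then quotes Greco's theorem \cite[5.3]{G} that a Noetherian filtered colimit of excellent rings along \'etale (more generally suitably smooth) transition maps is excellent. What your outline is trying to do is reprove that theorem in this special case; the honest fix is either to cite Greco (or an equivalent statement on ind-\'etale colimits of excellent rings) as the paper does, or to import a genuinely nontrivial input about formal fibres of such colimits --- the elementary completion bookkeeping you propose cannot supply it.
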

\begin{proof}
 In the directed system $\{ A^E \}_{E \in \C_k}$ each map $A^F \rt A^E$ (when $F\subseteq E$) is etale as $A^E = A^F\otimes_F E$ and $E$ is seperable over $F$.
 So by a result of \cite[5.3]{G} it follows that $A\otimes_k \ov{k}$ is excellent.
\end{proof}

We now show the main properties of $A\otimes_k \ov{k}$ that we need
\begin{theorem}
 \label{eclair}
 (with hypotheses as above) Set $T = A\otimes_k {\ov{k}}$ and $\n = \m^{\ov{k}}$. We have
 \begin{enumerate}[\rm (1)]
  \item  $A$ is \CM \ (Gorenstein, regular) if and only if $T$ is \CM \ (Gorenstein, regular).
  \item If $A$ is normal domain if and only if $T$ is a normal domain.
  \item Assume $A$ is also excellent normal domain. Then we have
  \begin{enumerate}[\rm (a)]
   \item $I$ is integrally closed in $A$ if and only if $IT$ is integrally closed in $T$
   \item $I$ is a $p_g$ ideal in $A$ if and only if $IT$ is a $p_g$ ideal in $T$.
  \end{enumerate}
 \end{enumerate}
\end{theorem}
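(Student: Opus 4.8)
The plan is to exploit the two complementary ways in which $T$ sits over $A$: on the one hand $A\to T$ is faithfully flat with closed fibre the field $\ov k$, and on the other hand $T$ is the filtered union $\bigcup_{E\in\C_k}A^E$ of the finite flat local extensions treated in Lemma \ref{basic}. I will use throughout that $T$ is Noetherian local (\ref{limit}) and, once $A$ is assumed excellent, that $T$ is excellent (Lemma \ref{excellent}).

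Part (1) is the quickest: $A\to T$ is flat and local with closed fibre $T/\n\cong\ov k$, and a field is regular, Gorenstein and \CM, so the flat-local ascent/descent results of \cite{Mat} (the Corollary to Theorem 23.3, Theorems 23.4 and 23.7) give the equivalence immediately, exactly as in Lemma \ref{basic}(5). For part (2), if $T$ is a normal domain then $A$ is a domain (being a subring of $T$) and normality descends along the faithfully flat map $A\to T$, so $A$ is normal. Conversely, if $A$ is a normal domain then each $A^E$ is a local normal domain by Lemma \ref{basic}(3) and (7), so $T=\bigcup_E A^E$ is a domain (a directed union of domains along injective maps); it is Noetherian; and it is integrally closed in its own fraction field, since any element of $\operatorname{Frac}(T)$ integral over $T$ already lies in $\operatorname{Frac}(A^E)$ and is integral over $A^E$ for some $E$, hence lies in $A^E\subseteq T$. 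Thus $T$ is a normal domain.

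For part (3) we now know $T$ is an excellent normal domain, and $IT$ is $\n$-primary when $I$ is $\m$-primary. For (a): if $I=\ov I$ in $A$ then each $IA^E$ is integrally closed in $A^E$ by Lemma \ref{basic}(8); given $x\in T$ integral over $IT=\bigcup_E IA^E$, choosing $E$ large enough that the monic equation witnessing integrality has its coefficients in the appropriate powers of $IA^E$ shows $x\in IA^E\subseteq IT$, so $IT$ is integrally closed. Conversely $\ov I\subseteq \ov{IT}\cap A=IT\cap A=I$ by faithful flatness of $A\to T$. For (b), recall $I$ is a $p_g$-ideal precisely when $\R(I)$ is a normal \CM\ domain, and note $\R(IT)=\R(I)\otimes_A T$; since $\R(I)\subseteq A[t]$ and $\R(IT)\subseteq T[t]$ are automatically domains, it suffices to match normality and Cohen--Macaulayness. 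For normality, $\R(I)$ is normal iff $\ov{I^n}=I^n$ for all $n\ge1$ (as $A$ is normal), which by part (a) applied to each $I^n$ (and $(IT)^n=I^nT$) is equivalent to $\ov{(IT)^n}=(IT)^n$ for all $n$, i.e.\ to $\R(IT)$ being normal (as $T$ is normal). For Cohen--Macaulayness, let $\M=\m\R(I)+\R(I)_+$ and $\M T$ be the graded maximal ideals; a Noetherian $\mathbb{N}$-graded ring finitely generated over a local ring is \CM\ if and only if it is \CM\ after localising at its graded maximal ideal, so it is enough to compare $\R(I)_\M$ with $\R(IT)_{\M T}$. The induced map $\R(I)_\M\to\R(IT)_{\M T}$ is flat and local, and its closed fibre is $\R(IT)/\M\R(IT)=(\R(I)/\M\R(I))\otimes_A T=k\otimes_A T=\ov k$, a field; hence $\R(I)_\M$ is \CM\ iff $\R(IT)_{\M T}$ is, again by the results of \cite{Mat} quoted in (1). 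Combining the two halves gives (b).

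The points that need care, as opposed to routine flat base change, are the ascent of normality in (2) — here one cannot argue fibrewise, since the generic fibres $\ov k\otimes_k\kappa(\p)$ of $A\to T$ need not be Noetherian, which is precisely why I pass to the $A^E$ and argue via integral closedness in the fraction field — and, in (3)(b), the reduction of Cohen--Macaulayness of the non-local, three-dimensional Rees algebra $\R(I)$ to that of its graded localisation $\R(I)_\M$, together with the identification of the closed fibre of $\R(I)_\M\to\R(IT)_{\M T}$ as $\ov k$. I expect this last point to be the main obstacle, though it is dispatched by standard facts about $\mathbb{N}$-graded rings over a local base.
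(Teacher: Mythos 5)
Your proposal is correct and follows essentially the same route as the paper: flat local base change with fibre $\ov{k}$ for part (1), the directed-union-of-$A^E$ argument for normality in (2), descent of integrality equations to a finite level $A^E$ plus faithful flatness for (3)(a), and matching normality via integral closedness of all powers $I^n$ together with flat base change of the Rees algebras for (3)(b). The only difference is that where the paper invokes the ``graded analog of (1)'' for $*$-local Rees algebras, you spell out the reduction to the localisation at the graded maximal ideal and compute the closed fibre explicitly, which is a harmless (and welcome) elaboration of the same step.
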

\begin{proof}
(1) The extension $A \rt T$ is flat local with fiber ring $\ov{k}$. The result follows from Corollary to Theorem 23.3, Theorem 23.4 and Theorem 23.7 in the
text \cite{Mat}.

(2) If $A$ is normal then so is $A^E$ for every $E \in \C_k$. In particular $T = \bigcup_{E \in \C_k}A^E$ is a domain.
If $R$ is a domain let $K(R)$ denote the fraction field of $R$. Clearly $R(T) = \bigcup_{E \in \C_k}R(A^E)$.
Let $\xi = a/b \in R(T)$ be integral over $T$.
Then $\xi$ satisfies a monic polynomial $h(x) \in T[x]$. Choose $E \in \C_k$ such that $a,b$ and all coefficients of $h$ are in $A^E$. Then
$\xi \in R(A^E)$ is integral over $A^E$. As $A^E$ is normal we have $\xi \in A^E $. So $\xi \in T$. Thus $T$ is normal.
Conversely if $T$ is normal then as the extension $A \rt T$ is flat we get by Corollary to Theorem 23.7 in \cite{Mat} we get that $A$ is normal.

(3)(a) If $IT$ is integrally closed in $T$ then $IT\cap A$ is integrally closed in $A$. But $A \rt T$ is faithfully flat. So $IT \cap A = I$ (note we did not use excellence of
$A$ to prove this). Conversely assume $I$ is integrally closed in $A$. As $A$ is excellent and normal, by Lemma \ref{basic}(8),
we have that $IA^E$ is integrally closed in $A^E$ for every $E \in \C_k$. Let
$\xi \in T$ be integral over $IT$. Say
we have an equation
\[
 \xi^n + a_1\xi^{n-1} + \cdots a_{n-1}\xi + a_n = 0
\]
with $a_i \in (IT)^i = I^iT$. We may choose $F \in \C_k$ such that $\xi \in A^T$ and $a_i \in I^iA^F$. So $\xi$ is integral over $IA^F$. But $IA^F$ is integrally
closed. Therefore $\xi \in IA^F$. So $\xi \in IT$. Thus $IT$ is integrally closed.

(3)(b) Let $\R(I), \R(IT)$ be the Rees Algebra of $I$ and $IT$ respectively. Notice $\R(IT) = \R(I)\otimes_A T = \R(I)\otimes_k \ov{k}$.
The rings $\R(I)$ and $\R(IT)$ are $*$-local. Furthermore the extension $\R(I) \rt \R(IT)$ is flat with fiber $\ov{k}$. So by graded analog of (1)  we get that
$\R(I)$ is \CM \ if and only if $\R(IT)$ is \CM.

First assume $I$ is a $p_g$ ideal in $A$. Then $I^n$ is integrally closed in $A$ for all $n \geq 1$. By 3(a) we get that $I^nT$ is integrally closed in $T$ for all $n \geq 1$.
Also as $T$ is normal we get $T[t]$ is normal. As $\R(IT)$ is integrally closed in $T[t]$ we get that it is a normal domain. Also as $\R(I)$ is \CM,  as discussed earlier
we get that $\R(IT)$ is \CM. So $IT$ is a $p_g$ ideal in $T$.

Conversely assume that $IT$ is a $p_g$ ideal in $T$. Then $(IT)^n = I^nT$ is integrally closed for all $n \geq 1$. By 3(a) we get that $I^n$ is integrally closed for all 
$n \geq 1$. As $A$ is normal, as argued before we get that $\R(I)$ is normal. Also as $\R(IT)$ is \CM, as discussed earlier we get that $\R(I)$ is normal. So
$I$ is a $p_g$ ideal in $A$.
\end{proof}

We now give
\begin{proof}[Proof of Theorem \ref{main-product}]
 Set $T = A\otimes_k \ov{k}$. Let $\n$ be the maximal ideal of $T$
 We note that $T$ is an excellent normal domain containing $\ov{k} \cong T/\n$ (see \ref{limit}, \ref{excellent} and \ref{eclair}(2)). 
 Let $I, J$ be two $p_g$ ideals in $A$. Then by \ref{eclair}3(b) we get that $IT, JT$ are $p_g$ ideals in $T$. By \cite[3.5]{OWY-1} we get that
 $(IT)(JT) = IJT$ is a $p_g$ ideal in $T$. So again by \ref{eclair}3(b) we get that $IJ$ is a $p_g$-ideal in $A$.
\end{proof}

\section{proof of Theorem \ref{main-existence}}
In this section we give 
\begin{proof}[Proof of Theorem \ref{main-existence}]Set $T = A\otimes_k \ov{k}$. Let $\n$ be the maximal ideal of $T$
 We note that $T$ is an excellent normal domain containing $\ov{k} \cong T/\n$ (see \ref{limit}, \ref{excellent} and \ref{eclair}(2)). 
 By \cite[4.1]{OWY-1} there exists a $p_g$ ideal $J$ in $T$. By \ref{limit} we have $T = \bigcup_{E \in \C_k}A^E$. So there exists $F \in \C_k$ which contains a set of minimal generators 
 of $J$. We may further assume  (by enlarging) that $F$ is Galois over $k$. Thus there exists ideal $W$ in $A^F$ with $WT = J$. By \ref{eclair}(3)(b) we get that
 $W$ is a $p_g$ ideal in $A^F$. Let $G$ be the Galois group of $F$ over $k$. Then $G$ acts on $A^F$ (via $\sigma(a\otimes f) = a\otimes \sigma(f)$). As $k$ has characteristic
 zero we have by \ref{basic}(9) that $(A^F)^G = A$. We also note that we have a natural $G$ action on $A^F[t]$ (fixing $t$) and clearly  its invariant ring is $A[t]$.
 Let $\sigma \in G$. It's action on $A^F[t]$ induces an isomorphism of between the Rees algebra's $\R(W)$ and $\R(\sigma(W))$. So $\sigma(W)$ is a $p_g$ ideal 
 in $A^F$.
 By Theorem \ref{main-product} we get that $K = \prod_{\sigma \in G}\sigma(W)$ is a $p_g$ ideal in $A^F$. Note $K$ is $G$-invariant. So the $G$ action of $A^F[t]$
 restricts to a $G$-action on $\R(K)$. As characteristic $k$ is zero we get that $V = \R(K)^G$ is a \CM \ normal subring of $A[t]$.
 Set $V = \bigoplus_{n \geq 0}V_n$. Note $V_0 = A$ and $V_n = K^n \cap A$ are integrally closed $\m$-primary ideals of $A$. Note $V$ is not necessarily standard graded. However
 it is well-known that a Veronese subring $V^{<l>} $ of $V$ is standard graded. Note $V^{<l>}$ is a \CM \ normal domain. Observe that
 $V^{<l>} = \R(V_l)$. Thus $V_l$ is a $p_g$-ideal in 
 $A$.
\end{proof}

\begin{remark}\label{obs}
 Our proof of Theorem \ref{main-existence} would go through in positive characteristic would go through if we knew order of $G$ is invertible in $k$. However 
 we have no control on $G$. So our proof does not extend in this case.
\end{remark}

\section*{Acknowledgements}
I thank Keiichi Watanabe and Ken-ichi Yoshida for many fruitful discussions regarding this paper.

\end{document}